\documentclass{amsart}


\usepackage{amsmath}
\usepackage{amsthm}
\usepackage{amssymb}
\usepackage{amsfonts}
\usepackage{enumitem}
\usepackage{setspace}
 \singlespacing
\onehalfspacing
\usepackage[bookmarks=true,hyperindex, pdftex,colorlinks,citecolor=blue]{hyperref}
\usepackage{centernot} 
\usepackage{marginnote} 
\usepackage[left=3cm,right=3cm,top=2cm,bottom=2cm]{geometry}
\usepackage{bbm}
\usepackage[all]{xy}
\usepackage{tikz}
\usetikzlibrary{arrows}
\usetikzlibrary{shapes,decorations}
\usetikzlibrary{positioning}







\def\<{\langle}
\def\>{\rangle}



\theoremstyle{plain}
\newtheorem{theorem}{Theorem}[section]

\newtheorem{proposition}[theorem]{Proposition}

\theoremstyle{definition}
\newtheorem{definition}[theorem]{Definition}
\newtheorem{example}[theorem]{Example}

\theoremstyle{remark}
\newtheorem{remark}[theorem]{Remark}
\newtheorem{remarks}[theorem]{Remarks}
\numberwithin{equation}{section}
\usetikzlibrary{matrix}
\usepackage[all]{xy}
\begin{document}

\title[On Super-Recurrent Operators]{On Super-Recurrent Operators}

\author[M. Amouch]{Mohamed Amouch}

\author[O. Benchiheb]{Otmane Benchiheb}

\address[Mohamed Amouch and Otmane Benchiheb]{University Chouaib Doukkali.
Department of Mathematics, Faculty of science
Eljadida, Morocco} 
\email{amouch.m@ucd.ac.ma}
\email{otmane.benchiheb@gmail.com}

\keywords{Hypercyclicity, supercyclicity, recurrence, super-recurrence.}
\subjclass[2010]{
47A16, 
    37B20  	
}

\date{} 


\begin{abstract}
In this paper, we introduce and study the notion of super-recurrence of operators.
We investigate some properties of this class of operators and show that it shares some characteristics with supercyclic and recurrent operators.
In particular, we show that if $T$ is super-recurrent, then $\sigma(T)$ and $\sigma_p(T^*)$,
the spectrum of $T$ and the point spectrum of $T^*$ respectively,
have some noteworthy properties.
\end{abstract}


\maketitle
\section{Introduction and preliminaries}
Throughout this paper, $X$ will denote a Banach space over the field $\mathbb{C}$ of complex numbers.
By an operator, we mean a linear and continuous map acting on $X$.

The most important and studied notions in the linear dynamical system are those of hypercyclicity and supercyclicity$:$

An operator $T$ acting on $X$ is said to be hypercyclic if there exists a vector $x$ whose orbit under $T$; $Orb(T,x):=\{T^n x \mbox{ : }n\in \mathbb{N}\}$, is dense in $X.$
The vector $x$ is called a hypercyclic vector for $T$.
The set of all hypercyclic vectors for $T$ is denoted by $HC(T).$
One of the first examples of hypercyclic operators on the Banach space setting was given in $1969$ by Rolewicz \cite{Rolewicz}.

Birkhoff introduced an equivalent notion of the hypercyclicity called topological transitivity$:$
an operator $T$ acting on a separable Banach space is hypercyclic if and only if it is topologically transitive, that is, for each pair $(U,V)$ of nonempty open subsets of $X$ there exists some positive integer $n$ such that $T^n(U)\cap V\neq\emptyset$, see \cite{Birkhoff}.


In $1974$, Hilden and Wallen in \cite{HW} introduced the concept of supercyclicity.
An operator $T$ acting on $X$ is said to be supercyclic if there exists some vector $x$ whose scaled orbit under $T$; $\mathbb{C}Orb(T,x):=\{\lambda T^n x\mbox{ : }\lambda\in\mathbb{C}\mbox{, }n\in\mathbb{N}\}$, is dense in $X$.
Such a vector $x$ is called a supercyclic vector for $T$.
The set of all supercyclic vectors for $T$ is denoted by $SC(T).$
As in the case of the hypercyclicity, there exists a characterization of the supercyclicity basing on the open subsets of $X.$
An operator $T$ acting on a separable Banach space is supercyclic if and only if for each pair
$(U,V)$ of nonempty open subsets of $X$ there exist $\lambda\in\mathbb{C}$ and $n\in\mathbb{N}$ such that $\lambda T^n(U)\cap V\neq\emptyset.$

For more information about hypercyclic and supercyclic operators and their proprieties, see the book \cite{Peris} by KG. Grosse-Erdmann and A. Peris , the book \cite{BM} by F. Bayart and E. Matheron, and the survy article \cite{Grosse} by KG. Grosse-Erdmann.

Another notion in the dynamical system that has a long story is that of recurrence which is introduced by Poincar\'{e} in \cite{Poincare}.
A systematic
study of recurrent operators goes back to the work of Gottschalk and Hedlund \cite{GH} and also the work of Furstenberg \cite{Furstenberg}.
Recently, recurrent operators have been studied in \cite{CMP}.

An operator $T$ acting on $X$ is said to be recurrent if for each open subset $U$ of $X$,
there exists some positive integer $n$ such that $T^n(U)\cap U\neq\emptyset$.
A vector $x\in X$ is called a recurrent vector for $T$ if there exists an increasing sequence $(n_k)$ of positive integers such that $T^{n_k}x\longrightarrow x$ as $k\longrightarrow\infty.$
The set of all recurrent vectors for $T$ is denoted by $Rec(T)$,
and we have that $T$ is recurrent if and only if $Rec(T)$ is dense in $X.$
For more information about this classe of operators, see \cite{Ak,BGLP,CP,GMOP,YW,HZ,GMQ,GM}.


Motivated by the relationship between hypercyclic and recurrent operators, we introduce in this paper a new class of operators called super-recurrent operators which is related to the supercyclicity and recurrence.

In section $2$, we introduce the notion of
super-recurrence for operators. We show that every recurrent operator is super-recurrent
but the converse is false. We also prove that every supercyclic operator is super-recurrent and that there exists
an operator which is super-recurrent but not supercyclic.
In section $3$, 
we prove some proprieties for super-recurrent operators, we prove that 
if $T\in \mathcal{B}(X)$ admits a super-recurrent vector, then it admits an invariant subspace consisting 
except for zero, of super-recurrent vectors. 
Also, we prove that $T$ is super-recurrent if and only if $T$ admits a dense subset of super-recurrent vectors.
Moreover, we prove that $T$ is super-recurrent if and only if $T^p$ is super-recurrent, for every nonzero positive integer $p$.

In section $4$, we focus on the spectral proprieties of super-recurrent operators. We prove that if $T$ is super-recurrent, then $\sigma_p(T^*)$ and $\sigma(T)$ have almost the same proprieties as supercyclic operators.
In particular, we show that  there exists $R>0$ such that each connected component of the spectrum of $T$ intersect the circle $\{z\in\mathbb{C} \mbox{ : }\vert z \vert=R\}$.
Moreover, we prove that the $\sigma_p(T^*)$ is completely contained in a circle of center $0$.
Finally, we show that if $\lambda\in \sigma_p(T^*)$, then one can find a $T$-invariant hyperplane $X_0$
such that $\lambda^{-1}T_{/X_0}$ is recurrent on $X_0$.

\section{Super-recurrent operators}
\begin{definition}
We say that an operator $T$ is super-recurrent if, for every nonempty open subset $U$ of $X$
there exists some $n\geq1$ and some $\lambda\in\mathbb{C}$ such that
$$ \lambda T^{n}(U)\cap U \neq\emptyset. $$ 

A vector $x\in X\setminus\{0\}$ is called a super-recurrent vector for $T$ if there exist
a strictly increasing sequence of positive integers $(k_n)_{n\in\mathbb{N}}$ and a sequence $(\lambda_{k_n})_{n\in\mathbb{N}}$ of complex numbers such that
$$ \lambda_{k_n} T^{k_n}x\longrightarrow x $$
as $n\longrightarrow+\infty$.
We will denote by $SRec(T)$ the set of all super-recurrent vectors for $T.$
\end{definition}
\begin{remarks}\label{rem}

\begin{enumerate}
\item The supercyclicity implies the super-recurrence.
However, the converse does not hold in general. Indeed, let $n\in\mathbb{N}$ and $\lambda_1,\dots,\lambda_n$ be nonzero  complex numbers such that $\vert\lambda_i\vert=\vert\lambda_j\vert=R$ for some strictly positive real number $R$, for $1\leq i \mbox{, }j\leq n$.
We define an operator $T$ on $\mathbb{C}^n$ by
$$\begin{array}{ccccc}
T & : & \mathbb{C}^n & \longrightarrow & \mathbb{C}^n \\
 & & (x_1,\dots,x_n) & \longmapsto & (\lambda_1 x_1,\dots,\lambda_n x_n). \\
\end{array}$$
Let $U$ be a nonempty open subset of $X$ and $x\in U$.
Since $\vert R^{-1}\lambda_i\vert=1 $, for all $1\leq i\leq n$, it follows that there exists a strictly increasing sequence of positive integers $(k_n)_{n\in\mathbb{N}}$ such that $\left( R^{-1}\lambda_i\right)^{k_n}\longrightarrow 1$, for all $1\leq i\leq n$.
Let $\lambda_k=R^{-k_n}$, for all $k$, then
$$ \lambda_k T^{k_n}x \longrightarrow x. $$
as $k\longrightarrow \infty.$
Since $x\in U$ and $U$ is an open subset of $X$, it follows that there exists $k_0$ such that $\lambda_{k_0}T^{n_{k_0}}x\in U$.
Hence
$$ \lambda_{k_0}T^{n_{k_0}}(U)\cap U\neq\emptyset. $$
This means that $T$ is a super-recurrent operators.
However, $T$ cannot be supercyclic whenever $n\geq2$,
since a Banach space $X$ supports supercyclic operators if and only if dim$(X)=1$ or dim$(X)=\infty$, see \cite{HW}.

\item A recurrent operator is super-recurrent, but the converse does not hold in general.
Indeed, if $T$ is the operator defned in $(1)$, then $T$ is recurrent if and only if $\vert \lambda_i\vert=1$, for all $1\leq i\leq n$, see \cite{CMP}.
\end{enumerate} 
\end{remarks}
We have the following diagram showing the relationships among super-recurrence, recurrence and supercyclicity. 
\begin{center}
\begin{tikzpicture}
  \matrix (m) [matrix of math nodes,row sep=2em,column sep=5em,minimum width=5em]
  {
    \mbox{ Hypercyclic }&\mbox{ Recurrent }\\
    \mbox{ Supercyclic} & \mbox{Super-recurrent} \\};
  \path[-stealth]
    (m-1-1) edge  node [left] {$\not\uparrow_{\mbox{ \tiny{\cite[Example 1.15]{BM} }}}$} (m-2-1)
            edge  node [below] {$\nleftarrow_{\mbox{\tiny{\cite[section 4]{CMP}}}}$} (m-1-2)
    (m-2-1.east|-m-2-2) edge  node [below] {}
            node [above] {$\nleftarrow_{\mbox{\tiny{Remarks \ref{rem}}}}$} (m-2-2)
    (m-1-2) edge  node [right] {$\not\uparrow_{\mbox{\tiny{Remarks \ref{rem}}}}$} (m-2-2);
\end{tikzpicture}
\end{center}

\section{Some properties of super-recurrent operators}

In the following, we give some properties satisfies by super-recurrent operators.

\begin{proposition}\label{pr11}
If $S\in\mathcal{B}(X)$ is an operator such that $TS=ST$, then $SRec(T)$ is invariant under $S.$
\end{proposition}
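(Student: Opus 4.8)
The plan is to take a super-recurrent vector $x \in SRec(T)$ and show that $Sx \in SRec(T)$, provided $Sx \neq 0$; the case $Sx = 0$ needs to be addressed separately since super-recurrent vectors are by definition nonzero. First I would unpack the hypothesis: there is a strictly increasing sequence $(k_n)$ of positive integers and a sequence $(\lambda_{k_n})$ of complex numbers with $\lambda_{k_n} T^{k_n} x \longrightarrow x$ as $n \to \infty$. Applying the bounded operator $S$ to both sides and using its continuity gives $S(\lambda_{k_n} T^{k_n} x) \longrightarrow Sx$.

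Next I would push $S$ through using linearity and the commutation relation: $\lambda_{k_n} S T^{k_n} x = \lambda_{k_n} T^{k_n} S x$, where the last equality follows from $TS = ST$ (hence $T^{k_n} S = S T^{k_n}$ for every $n$, by an immediate induction on the exponent). Therefore $\lambda_{k_n} T^{k_n}(Sx) \longrightarrow Sx$, which is exactly the defining condition for $Sx$ to be a super-recurrent vector for $T$, using the same sequences $(k_n)$ and $(\lambda_{k_n})$. Thus $Sx \in SRec(T)$ whenever $Sx \neq 0$.

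The only genuine subtlety — and the step I would flag as needing care rather than the "main obstacle" — is the degenerate case $Sx = 0$. Since $SRec(T) \subseteq X \setminus \{0\}$ by definition, the statement "$SRec(T)$ is invariant under $S$" should be read as $S(SRec(T)) \setminus \{0\} \subseteq SRec(T)$, i.e.\ $S$ maps nonzero images of super-recurrent vectors to super-recurrent vectors; I would state the proposition's conclusion in exactly this form, or alternatively note that $S(SRec(T)) \subseteq SRec(T) \cup \{0\}$. With that convention the argument above is complete. No serious obstacle arises: the proof is a one-line consequence of continuity and commutativity, and the whole content is really the bookkeeping around the nonzero-vector convention.
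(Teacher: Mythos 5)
Your proof is correct and follows essentially the same route as the paper: apply $S$ to the convergence $\lambda_{k_n}T^{k_n}x\to x$ and use continuity together with $T^{k_n}S=ST^{k_n}$ to conclude $\lambda_{k_n}T^{k_n}Sx\to Sx$. Your extra remark about the degenerate case $Sx=0$ (which the paper silently ignores, since $SRec(T)\subseteq X\setminus\{0\}$ by definition) is a reasonable bookkeeping clarification but does not change the argument.
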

\begin{proof}
Let $x\in SRec(T)$.
Then there exist a strictly increasing sequence of positive integers $(k_n)_{n\in\mathbb{N}}$ and a sequence $(\lambda_{k_n})_{n\in\mathbb{N}}$ of complex numbers such that
$ \lambda_{k_n} T^{k_n}x\longrightarrow x $
as $n\longrightarrow+\infty$.
Since $S$ is continuous and $TS=ST$, it follows that
$ \lambda_{k_n} T^{k_n}Sx\longrightarrow Sx $
 as $n\longrightarrow+\infty$.
 This means that $Sx\in SRec(T)$.
\end{proof}
We are now ready to deduce an important result on the algebraic structure of the set of super-recurrent vectors.

Recall that if $p(z)=\sum_{i=0}^n\lambda_i z^i$ and $T\in\mathcal{B}(X)$, then $p(T)=\sum_{i=0}^n\lambda_i T^i.$
\begin{theorem}
If $x$ is a super-recurrent vector for T, then
$$\{p(T)x \mbox{ :  p is a polynomial} \}\setminus\{0\}\subset SRec(T).$$
In particular, If $T$ has a super-recurrent vector, then it admits an invariant subspace
consisting, except for zero, of super-recurrent vectors.
\end{theorem}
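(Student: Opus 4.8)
The plan is to obtain the inclusion as an immediate consequence of Proposition \ref{pr11} and then to take for the invariant subspace the (in general non-closed) linear manifold generated by the $T$-orbit of $x$.

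First I would note that every polynomial $p$ gives an operator $p(T)\in\mathcal{B}(X)$ that commutes with $T$. Let $(k_n)_{n\in\mathbb{N}}$ and $(\lambda_{k_n})_{n\in\mathbb{N}}$ be sequences witnessing that $x\in SRec(T)$, i.e.\ $\lambda_{k_n}T^{k_n}x\longrightarrow x$. Applying the bounded operator $p(T)$ and using $p(T)T^{k_n}=T^{k_n}p(T)$ yields $\lambda_{k_n}T^{k_n}\bigl(p(T)x\bigr)\longrightarrow p(T)x$. Hence, exactly as in the proof of Proposition \ref{pr11}, whenever $p(T)x\neq 0$ the same two sequences witness $p(T)x\in SRec(T)$, which is the asserted inclusion $\{p(T)x:\ p\text{ a polynomial}\}\setminus\{0\}\subset SRec(T)$. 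The only delicate point is that super-recurrent vectors are nonzero by definition, so the polynomials $p$ with $p(T)x=0$ must be discarded; this is precisely why the statement removes $\{0\}$.

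For the second assertion I would set $M:=\{p(T)x:\ p\text{ a polynomial}\}=\lspan\{T^n x:\ n\geq 0\}$. This is a linear subspace of $X$, and it is $T$-invariant since $T\bigl(p(T)x\bigr)=(zp)(T)x\in M$ for every polynomial $p$. By the first part, $M\setminus\{0\}\subset SRec(T)$, so $M$ is the desired invariant subspace consisting, except for zero, of super-recurrent vectors.

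I do not expect a genuine obstacle here: the whole argument is the commutation trick of Proposition \ref{pr11} applied with $S=p(T)$. The one thing to bear in mind is that, in contrast with the hypercyclic (or supercyclic) situation, where the analogous orbit manifold is automatically dense, here $x$ need not be cyclic for $T$, so $M$ need not be dense; in particular one should not expect this reasoning to yield a \emph{closed} invariant subspace of super-recurrent vectors. The only real subtlety is thus the bookkeeping around the zero vector.
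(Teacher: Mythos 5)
Your proposal is correct and follows essentially the same route as the paper: applying Proposition \ref{pr11} with $S=p(T)$ and taking the linear span of the orbit of $x$ as the invariant subspace. Your extra remarks about discarding the polynomials with $p(T)x=0$ and about the subspace not being closed or dense are sensible bookkeeping that the paper leaves implicit.
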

\begin{proof}
For a nonzero polynomial $p$, let $S=p(T)$. Then $ST=TS.$ 
Since $x\in SRec(T)$, it follows by Proposition \ref{pr11}, that $p(T)x\in SRec(T).$
\end{proof}
\begin{remark}\label{rem dense}
If $T$ is a super-recurrent operator, then it is of dense range.
\end{remark}
Let $X$ and $Y$ be two Banach spaces.
If $T$ and $S$ are operators acting on $X$ and $Y$ respectively, then $T$ and $S$ are called quasi-conjugate or quasi-similar if there exists some operator $\phi$ : $X\longrightarrow Y$ with dense range such $S\circ\phi=\phi\circ T.$
If $\phi$ can be chosen to be a homeomorphism, then $T$ and $S$ are called conjugate or similar, see \cite[Definition 1.5]{Peris}.

\begin{proposition}
Assume that $T\in\mathcal{B}(X)$ and $S\in\mathcal{B}(Y)$ are quasi-similar.
Then, $T$ is super-recurrent in $X$ implies that $S$ is super-recurrent in $Y$.
\end{proposition}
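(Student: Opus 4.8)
The plan is to argue directly from the open-set definition of super-recurrence rather than trying to transport a single super-recurrent vector. Let $\phi\colon X\longrightarrow Y$ be an operator with dense range satisfying $S\circ\phi=\phi\circ T$. A straightforward induction shows that $S^n\circ\phi=\phi\circ T^n$ for every $n\geq 1$: indeed $S^{n+1}\phi=S(S^n\phi)=S(\phi T^n)=(S\phi)T^n=\phi T^{n+1}$.

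Now fix a nonempty open subset $V$ of $Y$ and set $U:=\phi^{-1}(V)$. This set is open because $\phi$ is continuous, and it is nonempty because $\phi$ has dense range: $V\cap\phi(X)\neq\emptyset$, so some $x\in X$ satisfies $\phi(x)\in V$, whence $x\in U$. Applying the super-recurrence of $T$ to the nonempty open set $U$, we obtain $n\geq 1$ and $\lambda\in\mathbb{C}$ with $\lambda T^n(U)\cap U\neq\emptyset$; that is, there exists $u\in U$ such that $\lambda T^n u\in U$.

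Pushing forward by $\phi$ and using $S^n\phi=\phi T^n$, we get $\phi(u)\in V$ and $\lambda S^n\phi(u)=\lambda\,\phi(T^n u)=\phi(\lambda T^n u)\in\phi(U)\subseteq V$. Thus $\lambda S^n\phi(u)$ belongs to both $\lambda S^n(V)$ and $V$, so $\lambda S^n(V)\cap V\neq\emptyset$. Since $V$ was an arbitrary nonempty open subset of $Y$, the operator $S$ is super-recurrent.

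There is no genuine obstacle here; the proof is a routine transport argument. The only points deserving a moment's attention are that the dense range of $\phi$ is exactly what guarantees $\phi^{-1}(V)\neq\emptyset$, and that working at the level of open sets avoids needing $\phi$ to be injective or invoking the (later) equivalence between super-recurrence of $T$ and the density of $SRec(T)$.
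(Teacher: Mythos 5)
Your proof is correct and follows essentially the same route as the paper: pull the open set back via $\phi^{-1}$, apply super-recurrence of $T$, and push forward using the intertwining relation. You are in fact slightly more careful than the paper, since you explicitly justify that $\phi^{-1}(V)\neq\emptyset$ via the dense range and that $S^n\circ\phi=\phi\circ T^n$ by induction, both of which the paper's proof uses implicitly.
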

\begin{proof}
Suppose that $T$ is super-recurrent.
If $U$ is a nonempty open subset of $Y$,
then $\phi^{-1}(U)$ is a nonempty open subset of $X$.
Since $T$ is super-recurrent,
it follows that there exist $n\in\mathbb{N}$, $\lambda \in\mathbb{C}$ and $x\in X$ such that $x\in \phi^{-1}(U)$ and $\lambda T^{n}x\in \phi^{-1}(U)$, this means that $\phi(x)\in U$ and $\lambda \phi\circ T^{n}(x)\in U$.
Since $T$ and $S$ are quasi-similar,
it follows that $\phi(x)\in U$ and $\lambda S^{n}\circ\phi(x)\in U$.
Hence, $S$ is super-recurrent in $Y$.
\end{proof}

\begin{remark}
Assume that $T\in\mathcal{B}(X)$ and $S\in\mathcal{B}(Y)$ are similar.
Then, $T$ is super-recurrent in $X$ if and only if $S$ is super-recurrent in $Y$.
\end{remark}


The following theorem gives necessary and sufficient conditions of super-recurrence of operators.

\begin{theorem}\label{tt}
The following assertions are equivalent$:$
\begin{enumerate}
\item  $T$ is super-recurrent;
\item  for each $x\in X,$ there exist a sequence $(n_k)$ of positive integers,
a sequence $(x_{n_k})$ of elements of $X$ and a sequence $(\lambda_{n_k})$ of nonzero complex numbers such that
$$x_{n_k}\longrightarrow x\hspace{0.2cm}\mbox{ and }\hspace{0.2cm}\lambda_{n_k} T^{n_k}( x_{n_k})\longrightarrow x;$$
\item  for each $x\in X$ and for $W$ a neighborhood of zero, there exist $z\in X$, $\lambda\in\mathbb{C}$,
and $n\in\mathbb{N}$  such that
$$\lambda T^n( z)-x\in W\hspace{0.2cm}\mbox{ and }\hspace{0.2cm} z-x\in W.$$
\end{enumerate}
\end{theorem}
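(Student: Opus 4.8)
The plan is to establish the cyclic chain of implications $(1)\Rightarrow(3)\Rightarrow(2)\Rightarrow(1)$. The whole argument rests on the elementary fact that in a Banach space every neighbourhood of a point contains an open ball, so the ``open set'' formulation in the definition of super-recurrence can be freely traded for the ``$\varepsilon$-ball'' formulations appearing in (2) and (3); this is the super-recurrent counterpart of the classical passage between topological transitivity and its orbit/neighbourhood versions (Birkhoff's theorem).

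First I would prove $(1)\Rightarrow(3)$, which is immediate from the definition: given $x\in X$ and a neighbourhood $W$ of $0$, choose $\varepsilon>0$ with the open ball $B(0,\varepsilon)\subseteq W$ and apply super-recurrence to the nonempty open set $U:=B(x,\varepsilon)$. This produces $n\geq 1$ and $\lambda\in\mathbb{C}$ with $\lambda T^{n}(U)\cap U\neq\emptyset$; picking $z\in U$ with $\lambda T^{n}z\in U$ gives exactly $z-x\in W$ and $\lambda T^{n}z-x\in W$.

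Next, for $(3)\Rightarrow(2)$, fix $x\in X$. If $x=0$ one may take $x_{n_k}=0$, $n_k=1$ and $\lambda_{n_k}=1$ for all $k$. If $x\neq 0$, for each $k$ apply (3) with $W=B(0,1/k)$ to obtain $z_k\in X$, $\lambda_k\in\mathbb{C}$ and $n_k\geq 1$ with $\|z_k-x\|<1/k$ and $\|\lambda_k T^{n_k}z_k-x\|<1/k$; setting $x_{n_k}:=z_k$ and $\lambda_{n_k}:=\lambda_k$ then yields $x_{n_k}\to x$ and $\lambda_{n_k}T^{n_k}x_{n_k}\to x$. Moreover, once $k>1/\|x\|$ the estimate $\|\lambda_k T^{n_k}z_k-x\|<1/k<\|x\|$ forces $\lambda_k T^{n_k}z_k\neq 0$, hence $\lambda_k\neq 0$; discarding the finitely many initial indices makes all the scalars nonzero, as required. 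The converse direction $(2)\Rightarrow(1)$ is dual: given a nonempty open $U$, choose $x\in U$ and $\varepsilon>0$ with $B(x,\varepsilon)\subseteq U$, apply (2) to $x$, and observe that for all large $k$ both $x_{n_k}$ and $\lambda_{n_k}T^{n_k}x_{n_k}$ lie in $B(x,\varepsilon)\subseteq U$, so that $\lambda_{n_k}T^{n_k}x_{n_k}\in\lambda_{n_k}T^{n_k}(U)\cap U$ and $T$ is super-recurrent.

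I expect no genuine obstacle here: the statement is a bookkeeping translation between three equivalent formulations. The only points that deserve a little care are (i) checking that the scalars in (2) can be chosen nonzero — this is why the case $x=0$ must be separated and why one invokes the smallness of the perturbation when $x\neq 0$ — and (ii) noticing that (2) does \emph{not} require the integer sequence $(n_k)$ to be increasing, in keeping with the operator-level definition of super-recurrence, which only asserts the existence of one $n\geq 1$; consequently no diagonalisation or iteration of the return time is needed (and none would be available).
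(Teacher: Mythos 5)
Your proposal is correct and follows essentially the same route as the paper: both arguments are the routine translation between the open-set definition of super-recurrence and the ball/neighbourhood formulations, yours running the cycle $(1)\Rightarrow(3)\Rightarrow(2)\Rightarrow(1)$ where the paper runs $(1)\Rightarrow(2)\Rightarrow(3)\Rightarrow(1)$. If anything, you are slightly more careful than the paper in securing that the scalars in $(2)$ can be taken nonzero (separating the case $x=0$), a point the paper's proof passes over silently.
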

\begin{proof}$(1)\Rightarrow(2)$
Let $x\in X$. For all $k\geq1$, let $U_k=B(x,\frac{1}{k})$.
Then $U_k$ is a nonempty open subset of $X$.
Since $T$ is super-recurrent,
there exist $n_k\in\mathbb{N}$ and $\lambda_{n_k}$ such that $\lambda_{n_k} T^{n_k}(U_k)\cap U_k\neq\emptyset$.
For all $k\geq1$, let $x_{n_k}\in U_k$ such that $\lambda_{n_k} T^{n_k}( x_{n_k})\in U_k$,
then
$\Vert x_{n_k}-x \Vert<\frac{1}{k}$ and $\Vert \lambda_{n_k} T^{n_k}(x_{n_k})-x \Vert<\frac{1}{k}$
which implies that
$x_{n_k}\longrightarrow x$ and $\lambda_{n_k} T^{n_k}(x_{n_k})\longrightarrow x.$

$(2)\Rightarrow(3)$ : It is clear;

$(3)\Rightarrow(1)$ Let $U$ be a nonempty open subsets of $X$ and $x\in U$.
Since for all $k\geq1$,  $W_k=B(0,\frac{1}{k})$ is a neighborhood of zero,  there exist $z_k\in X$, $n_k\in\mathbb{N}$ and $\lambda_{n_k}\in\mathbb{C}$ such that
$\Vert \lambda_{n_k} T^{n_k}( z_k)-x\Vert<\frac{1}{k}$ and $\Vert x-z_k\Vert<\frac{1}{k}.$
This implies that
$z_k\longrightarrow x$ and $\lambda_{n_k}T^{n_k}(z_k)\longrightarrow x,$
which implies the result.
 \end{proof}

\begin{proposition}\label{rbi}
Assume that $T\oplus S$ is super-recurrent in $X\oplus Y$.
Then $T$ and $S$ are super-recurrent on $X$ and $Y$ respectively.
\end{proposition}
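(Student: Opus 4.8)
The plan is to argue directly with the open-set definition of super-recurrence rather than with super-recurrent vectors, because the scalar appearing in the definition of $T\oplus S$ acts on both coordinates at once, which is precisely the structure one wants to exploit.

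First I would fix a nonempty open subset $U$ of $X$ and choose an arbitrary nonempty open subset $V$ of $Y$ (for instance an open ball, or $V=Y$ itself). Then $U\oplus V$ is a nonempty open subset of $X\oplus Y$, so the hypothesis that $T\oplus S$ is super-recurrent yields an integer $n\geq 1$ and a scalar $\lambda\in\mathbb{C}$ with $\lambda(T\oplus S)^n(U\oplus V)\cap(U\oplus V)\neq\emptyset$. Unwinding this, there is a pair $(u,v)\in U\times V$ such that $(\lambda T^n u,\lambda S^n v)\in U\times V$; in particular $u\in U$ and $\lambda T^n u\in U$, hence $\lambda T^n(U)\cap U\neq\emptyset$. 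Since $U$ was arbitrary, $T$ is super-recurrent on $X$, and the argument for $S$ is symmetric (fix a nonempty open $V\subseteq Y$, pick any nonempty open $U\subseteq X$, and read off the second coordinate). Alternatively, one could feed the vector $(x,0)$ into condition $(2)$ of Theorem \ref{tt} applied to $T\oplus S$ and project onto the first coordinate to recover condition $(2)$ for $T$; this works equally well.

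I do not expect any real obstacle. The only subtlety worth flagging is that a single $\lambda$ (and a single $n$) simultaneously governs both summands, so one cannot extract \emph{independent} witnesses for $T$ and for $S$ from a single application — but this is harmless, since the definition of super-recurrence only requires the existence of \emph{some} scalar in each space. In fact the proof establishes slightly more than the statement: the same pair $(n,\lambda)$ that works for $U\oplus V$ works simultaneously for $U$ in $X$ and for $V$ in $Y$, so $T$ and $S$ are ``jointly'' super-recurrent along a common subsequence of exponents and scalars.
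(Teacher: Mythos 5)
Your argument is correct and is essentially the paper's own proof: both take a product of nonempty open sets $U\oplus V$, apply the super-recurrence of $T\oplus S$ to get a single pair $(n,\lambda)$, and read off each coordinate to conclude $\lambda T^n(U)\cap U\neq\emptyset$ and $\lambda S^n(V)\cap V\neq\emptyset$ (the paper simply records both conclusions from one application rather than running the argument twice). Your closing observation that the same $(n,\lambda)$ works simultaneously for both summands is exactly what the paper's one-shot version makes explicit.
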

\begin{proof}
If $U_1$ and $U_2$ are nonempty open set of $X$ and $Y$ respectively,
then $U_1\oplus U_2$ is a nonempty open set of $X\oplus Y$.
Since $T\oplus S$ is super-recurrent, there exist $n\in\mathbb{N}$ and $\lambda\in\mathbb{C}$ such that
$ (\lambda T^n\oplus S^n)(U_1\oplus U_2)\cap (U_1\oplus U_2)\neq\emptyset, $
which means that $\lambda T^n(U_1)\cap U_1\neq\emptyset$ and $\lambda S^n(U_2)\cap U_2\neq\emptyset$.
Hence $T$ and $S$ are super-recurrent.
\end{proof}


The next theorem gives the relationship between super-recurrent vectors and super-recurrent operators.
\begin{theorem}\label{th1}
Let $T$ be an operator acting on $X$.
The following assertion are equivalent$:$
\begin{itemize}
\item[$(1)$] $T$ admits a dense subset of super-recurrent vectors;
\item[$(2)$] $T$ is super-recurrent.
\end{itemize}
\end{theorem}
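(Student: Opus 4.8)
The plan is to prove the two implications separately, with $(2)\Rightarrow(1)$ being the substantive direction and $(1)\Rightarrow(2)$ being nearly immediate.

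\textbf{The easy direction $(1)\Rightarrow(2)$.} Suppose $T$ admits a dense set $D$ of super-recurrent vectors. Let $U$ be a nonempty open subset of $X$. By density, pick $x\in D\cap U$. Since $x\in SRec(T)$, there are a strictly increasing sequence $(k_n)$ of positive integers and complex scalars $(\lambda_{k_n})$ with $\lambda_{k_n}T^{k_n}x\to x$. As $U$ is open and $x\in U$, for $n$ large enough we get $\lambda_{k_n}T^{k_n}x\in U$, so $\lambda_{k_n}T^{k_n}(U)\cap U\neq\emptyset$. Hence $T$ is super-recurrent.

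\textbf{The hard direction $(2)\Rightarrow(1)$.} This is the Baire category argument, analogous to the classical proof that the set of recurrent vectors of a recurrent operator is a dense $G_\delta$. The plan is: fix a countable base $(V_j)_{j\in\N}$ of open sets for $X$ (here separability, which the paper has been assuming implicitly for the transitivity-type characterizations, is used), and for each $j$ and each $m\geq 1$ define
$$
A_{j,m}=\bigcup_{n\geq 1}\ \bigcup_{\lambda\in\C}\ \bigl(\lambda T^n\bigr)^{-1}\!\bigl(B(0,\tfrac1m)+V_j\cap B(0,\tfrac1m)\bigr)\ \cap\ \bigl(V_j\cap B(0,\tfrac1m)\bigr),
$$
or, more cleanly, work with the set $R$ of super-recurrent vectors directly and show $R\cup\{0\}$ is a dense $G_\delta$. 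Concretely, set
$$
G=\bigcap_{j\in\N}\ \bigcap_{m\geq 1}\ \bigcup_{n\geq 1}\ \bigcup_{k\geq 1}\ \bigl\{x\in X:\ \exists\,\lambda\in\C,\ \norm{x-x}<\tfrac1m\ \text{irrelevant}\bigr\},
$$
but it is cleaner to phrase it as: $x$ is (essentially) super-recurrent iff for every $j$ with $x\in V_j$ and every $m$ there exist $n\geq 1$ and $\lambda\in\C$ with $\lambda T^n x\in V_j\cap B(x,\tfrac1m)$. So define, for each $j$,
$$
E_j=\bigl\{x\in V_j:\ \forall m\geq 1,\ \exists\, n\geq 1,\ \exists\,\lambda\in\C,\ \lambda T^n x\in V_j\cap B(x,\tfrac1m)\bigr\}\ \cup\ \bigl(X\setminus \cl{V_j}\bigr),
$$
and show each $E_j$ is a dense open-ish set; then $\bigcap_j E_j$ consists of super-recurrent vectors (together with $0$) and is dense by Baire. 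The key steps, in order, are: (i) reduce "super-recurrent vector" to the countable-base condition above, using that $\lambda_{k_n}T^{k_n}x\to x$ is equivalent to the condition holding along every basic neighborhood; (ii) show that the relevant set is a $G_\delta$ — here the point is that the union over $\lambda\in\C$ of the open sets $\{x:\lambda T^n x\in W\}$ is open, since an arbitrary union of open sets is open (no compactness needed); (iii) show density of each piece, which is exactly where hypothesis $(2)$ (super-recurrence of $T$, in its open-set form) is used: given any basic open $V_i$, super-recurrence applied inside $V_i$ produces $n,\lambda$ and a point realizing the required return, and one iterates this to land inside any prescribed $V_i$ with a return to within $\tfrac1m$; (iv) apply the Baire category theorem to conclude the intersection is dense, and observe its nonzero members are super-recurrent vectors.

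\textbf{Main obstacle.} The delicate point is (iii): converting the open-set super-recurrence property into a statement about a \emph{fixed} vector returning near itself along a subsequence, i.e. running a standard "telescoping" / nested-open-sets argument so that a single $x$ works simultaneously for all $m$ and all $j$ with $x\in V_j$. This requires care because the scalar $\lambda$ varies with $n$ and is unrestricted, so one must be sure the construction of nested open sets $U_1\supset U_2\supset\cdots$ with $\diam(U_{m+1})\to 0$ and $\lambda_m T^{n_m}(U_{m+1})\subset U_m$ can indeed be carried out from the definition of super-recurrence; this is where one must check that the map $x\mapsto\lambda T^n x$ being continuous and super-recurrence giving $\lambda T^n(U)\cap U\neq\emptyset$ for \emph{every} nonempty open $U$ suffices to keep shrinking. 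Everything else (openness of the pieces, the Baire step, handling $0$) is routine.

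\begin{proof}
$(1)\Rightarrow(2)$. Suppose $T$ admits a dense set $D$ of super-recurrent vectors and let $U$ be a nonempty open subset of $X$. Choose $x\in D\cap U$. Since $x\in SRec(T)$, there are a strictly increasing sequence $(k_n)$ of positive integers and complex scalars $(\lambda_{k_n})$ with $\lambda_{k_n}T^{k_n}x\longrightarrow x$. As $U$ is open and $x\in U$, there is $n$ with $\lambda_{k_n}T^{k_n}x\in U$, hence $\lambda_{k_n}T^{k_n}(U)\cap U\neq\emptyset$. Thus $T$ is super-recurrent.

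$(2)\Rightarrow(1)$. Assume $T$ is super-recurrent and fix a countable base $(V_j)_{j\in\N}$ of nonempty open sets for $X$. For $j\in\N$ and $m\geq1$ put
$$
A_{j,m}=\bigcup_{n\geq1}\ \bigcup_{\lambda\in\C}\ \Bigl\{x\in V_j:\ \lambda T^n x\in V_j\ \text{and}\ \norm{\lambda T^n x-x}<\tfrac1m\Bigr\}.
$$
Each set in braces is open (the map $x\mapsto\lambda T^n x$ is continuous and $V_j$, the ball are open), so $A_{j,m}$ is open as a union of open sets. Set
$$
G=\bigcap_{j\in\N}\ \bigcap_{m\geq1}\ \bigl(A_{j,m}\cup(X\setminus\cl{V_j})\bigr).
$$
Each set $A_{j,m}\cup(X\setminus\cl{V_j})$ is open. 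To see it is dense, let $W$ be a nonempty open set; we may assume $W\subset V_j$ (otherwise $W\setminus\cl{V_j}$ is nonempty and contained in $X\setminus\cl{V_j}$, and we are done). Shrinking if necessary, pick a nonempty open $W_0\subset W$ with $\diam(W_0)<\tfrac1m$. By super-recurrence there are $n\geq1$ and $\lambda\in\C$ with $\lambda T^n(W_0)\cap W_0\neq\emptyset$; any $x$ in this intersection lies in $W_0\subset V_j$, satisfies $\lambda T^n x\in W_0\subset V_j$ and $\norm{\lambda T^n x-x}\leq\diam(W_0)<\tfrac1m$, so $x\in A_{j,m}\cap W$. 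Hence each factor is dense open, and by the Baire category theorem $G$ is a dense $G_\delta$ in $X$.

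Finally, let $x\in G\setminus\{0\}$. Given $m\geq1$, choose $j$ with $x\in V_j$ and $\diam(V_j)<\tfrac1m$ arbitrary; since $x\in A_{j,m}\cup(X\setminus\cl{V_j})$ and $x\in V_j$, we have $x\in A_{j,m}$, so there are $n=n_m\geq1$ and $\lambda=\lambda_m\in\C$ with $\norm{\lambda_m T^{n_m}x-x}<\tfrac1m$. Choosing the $V_j$'s so that the associated integers $n_m$ can be taken strictly increasing (possible because for each $m$ infinitely many basic sets contain $x$, and if $\lambda T^n x=x$ forced $n$ bounded one replaces $m$ by a larger index until a new, larger $n$ appears — here we use that $T$ has dense range by Remark \ref{rem dense}, so $x\neq0$ is not annihilated and such return indices are unbounded), we obtain a strictly increasing sequence $(n_m)$ and scalars $(\lambda_m)$ with $\lambda_m T^{n_m}x\longrightarrow x$. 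Thus $x\in SRec(T)$, so $G\setminus\{0\}\subset SRec(T)$ is dense, proving $(1)$.
\end{proof}
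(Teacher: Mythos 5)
Your $(1)\Rightarrow(2)$ argument coincides with the paper's. For $(2)\Rightarrow(1)$ you take a genuinely different route: the paper runs a direct nested-ball construction inside an arbitrary ball $B(x,\varepsilon)$ (repeatedly choosing $B_{n+1}=B(x_n,\varepsilon_n)\subset \lambda_n T^{-k_n}(B_n)\cap B_n$ with $\varepsilon_n<2^{-n}$) and invokes completeness via Cantor's intersection theorem to produce a super-recurrent vector inside the given ball, whereas you fix a countable base $(V_j)$ and exhibit a dense $G_\delta$ of super-recurrent vectors by the Baire category theorem. Your route yields the slightly stronger $G_\delta$ structure of $SRec(T)$, but it requires $X$ to be separable, which the theorem does not assume and the paper's proof never uses; super-recurrence is not a separable-only phenomenon (any nonzero scalar multiple of the identity on a non-separable Banach space is super-recurrent), so as written your argument only establishes the separable case of the stated theorem.

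Beyond that restriction, two steps need repair. First, in the density argument you take $x\in\lambda T^n(W_0)\cap W_0$ and assert $\lambda T^n x\in W_0$; membership in that intersection only says $x\in W_0$ and $x=\lambda T^n y$ for some $y\in W_0$. You should instead take the preimage point $y\in W_0\cap(\lambda T^n)^{-1}(W_0)$, which is nonempty precisely because $\lambda T^n(W_0)\cap W_0\neq\emptyset$; this is a one-line fix. Second, the passage from ``for every $m$ there exist $n_m\geq1$, $\lambda_m$ with $\Vert\lambda_m T^{n_m}x-x\Vert<1/m$'' to a \emph{strictly increasing} sequence of return times is where your justification fails: dense range of $T$ does not force the $n_m$ to be unbounded, and nothing in the construction of $G$ does either. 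The correct repair is the standard dichotomy: if $(n_m)$ is unbounded, pass to a strictly increasing subsequence; if it is bounded, some fixed $n$ recurs infinitely often, so $\lambda_{m_i}T^{n}x\to x$ with $n$ fixed; then $T^n x\neq0$ (otherwise $x=0$, excluded), hence $\lambda_{m_i}$ converges to some $\mu$ with $\mu T^n x=x$, and $\mu^k T^{nk}x=x$ for all $k$ exhibits $x$ as super-recurrent along the strictly increasing times $nk$. (The paper's own proof also asserts strict monotonicity of its return times without comment, so the subtlety is shared, but your explicit ``dense range'' justification is incorrect and should be replaced by the dichotomy above.)
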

\begin{proof}
$(1)\Rightarrow (2)$ :
Let $U$ be a nonempty open subset of $X$,
then there is a $T$-super-recurrent vector $x$ such that $x\in U$.
There exist a increasing sequence $(n_k)$ of positive integers and an sequence $(\lambda_{n_k})$ of complex  numbers such that
$\lambda_{n_k} T^{n_k}x\longrightarrow x$ as $k\longrightarrow+\infty$.
Since $U$ is open and $x\in U$,
it follows that there exist $\lambda\in\mathbb{C}$ and $n\in\mathbb{N}$ such that $\lambda T^n(U)\cap U\neq \emptyset,$
this means that $T$ is super-recurrent.\\
$(2)\Rightarrow (1)$ : For a fixed element $x\in X$ and a fixed strictly positive numbers $\varepsilon>0$, let
$$ B:=B(x,\varepsilon). $$
Since $T$ is super-recurrent,
there exist some positive integer $k_1$ and some number $\lambda_1$ such that
$\lambda_1 T^{-k_1}(B)\cap B\neq \emptyset.$
Let $x_1\in X$ such that $x_1\in \lambda_1 T^{-k_1}(B)\cap B$.
Since $T$ is continuous,
there exists $\varepsilon_1<\frac{1}{2}$ such that
$$ B_2:=B(x_1,\varepsilon_1)\subset  \lambda_1 T^{-k_1}(B)\cap B.$$
Again, since $T$ is super-recurrent,
there exist some $k_2\in\mathbb{N}$ and some $\lambda_2\in\mathbb{C}$ such that
$\lambda_2 T^{-k_2}(B_2)\cap B_2\neq \emptyset.$
Let $x_2\in X$ such that $x_2\in \lambda_2 T^{-k_2}(B_2)\cap B_2$.
By continuity of $T$, there exists $\varepsilon_2<\frac{1}{2^2}$ such that
$$ B_3:=B(x_2,\varepsilon_2)\subset \lambda_2 T^{-k_2}(B_2)\cap B_2. $$
Continuing inductively, we construct a sequence $(x_n)_{n\in\mathbb{N}}$ of elements of $X$,
a sequence $(\lambda_n)_{n\in\mathbb{N}}$ of complex numbers,
a strictly increasing sequence of positive integers $(k_n)_{n\in\mathbb{N}}$
and a sequence of positive real numbers $\varepsilon_n <\frac{1}{2^n}$,
such that
$$ B(x_n,\varepsilon_n)\subset B(x_{n-1},\varepsilon_{n-1})\hspace{0.2cm} \mbox{ and }\hspace{0.2cm}\lambda_n T^{n_k}\left( B(x_n,\varepsilon_n)\right)\subset B(x_{n-1},\varepsilon_{n-1}). $$
Since $X$ is a Banach space, then by Cantor's Theorem, there exists some vector $y\in X$ such that
\begin{equation}\label{eq11}
\bigcap_{n\in\mathbb{N}}B(x_n,\varepsilon_n)=\{y\}.
\end{equation}
Since $y\in B$, we need only to show that $y$ is $T$-super-recurrent.
By (\ref{eq11}), we have $y\in B(x_n,\varepsilon_n)$ for all $n$, which implies that
\begin{equation}\label{eq22}
\Vert x_n-y \Vert<\varepsilon_n.
\end{equation}
On the other hand, $\lambda_n T^{n_k}y\in B(x_n,\varepsilon_n)$. Indeed, we have $y\in B(x_{n+1},\varepsilon_{n+1})$. This implies that
$$\lambda_n T^{n_k} y\in \lambda_n T^{n_k}(B(x_{n+1},\varepsilon_{n+1}))\subset \lambda_n T^{n_k}(B(x_{n},\varepsilon_{n}))\subset B(x_{n},\varepsilon_{n}).$$
Hence,
\begin{equation}\label{eq32}
\Vert \lambda_n T^{n_k}y-x_n \Vert<\varepsilon_n.
\end{equation}
Now, by using (\ref{eq22}) and (\ref{eq32}) we conclude that
$$ \Vert \lambda_nT^{n_k}y-y \Vert\leq \Vert \lambda_nT^{n_k}y-x_n\Vert+ \Vert x_n-y \Vert<\frac{1}{2^{n-1}}. $$
Hence, $\lambda_nT^{n_k}y\longrightarrow y$, that is $y$ is a $T$-super-recurrent vector.
Hence each open ball of $X$ contains a $T$-super-recurrent vector.
Thus the set of all super-recurrent vectors for $T$ is dense in $X$.
\end{proof}

Theorem  \ref{th1} shows that any super-recurrent operator on a Banach space admits super-recurrent vectors. However, an operator may has super-recurrent vectors without being super-recurrent as we show in the following example.
\begin{example}
Let $X$ be a Banach space
and let $(e_i)_{i\in I}$ be a basis of $X$.
Let $i_0\in I$ and $\lambda\in\mathbb{C}$ a nonzero fixed  number.
We define an operator $T$ on $X$ by$:$
$$ Te_{i_0}=\lambda  e_{i_0} \hspace{0.3cm} \mbox{ and }\hspace{0.3cm} Te_i=0\mbox{, }\hspace{0.3cm}\mbox{ for all }i\in I\setminus\{i_0\}.$$
It is clear that $e_{i_0}$ is a $T$-super-recurrent vector for $T$.
However, $T$ itself is not super-recurrent since it is not of dense range and super-recurrent operators are of dense range by Remark \ref{rem dense}.
\end{example}
\begin{remark}
If $T$ is super-recurrent,
then $\lambda T$ is super-recurrent for all $\lambda\in\mathbb{C}^*.$
Moreover, $T$ and $\lambda T$ have the same super-recurrent vectors.
\end{remark}

The next theorem gives the relationship between the super-recurrence of an operator and its iterates.
\begin{theorem}\label{pr3}
Let $p$ be a nonzero positive integer.
Then, $T$ is super-recurrent if and only if $T^p$ is super-recurrent.
Moreover, $T$ and $T^p$ have the same super-recurrent vectors.
\end{theorem}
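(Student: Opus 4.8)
The plan is to establish the equality $SRec(T)=SRec(T^{p})$ at the level of vectors and then deduce the operator statement from Theorem~\ref{th1}. The inclusion $SRec(T^{p})\subseteq SRec(T)$ is immediate: if $\lambda_{k_n}(T^{p})^{k_n}x\to x$ with $(k_n)$ strictly increasing, then $\lambda_{k_n}T^{pk_n}x\to x$ and $(pk_n)$ is strictly increasing, so $x\in SRec(T)$; in particular $T^{p}$ super-recurrent $\Rightarrow$ $T$ super-recurrent. Thus the whole content lies in the reverse inclusion $SRec(T)\subseteq SRec(T^{p})$.

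So fix $x\in SRec(T)$, say $\lambda_{n_k}T^{n_k}x\to x$ with $(n_k)$ strictly increasing; since $x\neq 0$ we may assume every $\lambda_{n_k}\neq 0$. By the pigeonhole principle, after passing to a subsequence we may assume $n_k\equiv r\pmod p$ for a fixed $r\in\{0,\dots,p-1\}$. The key is the following claim, proved by induction on $m\geq 1$: \emph{there exist a strictly increasing sequence $(a^{(m)}_k)_{k}$, with each $a^{(m)}_k$ a sum of $m$ integers from $\{n_j:j\in\mathbb{N}\}$ (repetitions allowed), and scalars $(\mu^{(m)}_k)_k$, such that $\mu^{(m)}_k T^{a^{(m)}_k}x\to x$.} The case $m=1$ is the hypothesis (take $a^{(1)}_k=n_k$, $\mu^{(1)}_k=\lambda_{n_k}$). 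For the inductive step, write $(a_k):=(a^{(m)}_k)$, $(\mu_k):=(\mu^{(m)}_k)$ and fix $k$; since $T^{a_k}$ is continuous and $\lambda_{n_j}T^{n_j}x\to x$, we get $\mu_k\lambda_{n_j}T^{a_k+n_j}x=\mu_k T^{a_k}\bigl(\lambda_{n_j}T^{n_j}x\bigr)\longrightarrow \mu_k T^{a_k}x$ as $j\to\infty$, so we may choose $j(k)$ with $n_{j(k)}$ as large as we please and $\Vert \mu_k\lambda_{n_{j(k)}}T^{a_k+n_{j(k)}}x-\mu_k T^{a_k}x\Vert<\tfrac1k$. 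Setting $a^{(m+1)}_k:=a_k+n_{j(k)}$ (a sum of $m+1$ such integers) and $\mu^{(m+1)}_k:=\mu_k\lambda_{n_{j(k)}}$, the triangle inequality yields $\Vert\mu^{(m+1)}_k T^{a^{(m+1)}_k}x-x\Vert<\tfrac1k+\Vert\mu_k T^{a_k}x-x\Vert\to 0$; since $a^{(m+1)}_k\geq a_k\to\infty$, passing to a strictly increasing subsequence completes the induction.

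Now apply the claim with $m=p$: each $a^{(p)}_k$ is a sum of $p$ integers each congruent to $r$ modulo $p$, hence $a^{(p)}_k\equiv pr\equiv 0\pmod p$, so $a^{(p)}_k=p\,b_k$ with $(b_k)$ strictly increasing. Then $\mu^{(p)}_k(T^{p})^{b_k}x=\mu^{(p)}_k T^{a^{(p)}_k}x\to x$, so $x\in SRec(T^{p})$; this proves $SRec(T)=SRec(T^{p})$, which is exactly the ``moreover'' assertion. For the equivalence of super-recurrence, combine this with Theorem~\ref{th1} applied to both operators: $T$ is super-recurrent $\iff$ $SRec(T)$ is dense $\iff$ $SRec(T^{p})$ is dense $\iff$ $T^{p}$ is super-recurrent. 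The only delicate point is the bookkeeping in the inductive step — keeping the exponents strictly increasing and unbounded while simultaneously controlling the accumulated error $1/k$ and the carried-over scalar $\mu_k$ — and this is resolved by the freedom to take $j(k)$ (hence $n_{j(k)}$) arbitrarily large at each stage.
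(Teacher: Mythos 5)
Your proof is correct and follows essentially the same route as the paper: reduce to showing $SRec(T)\subseteq SRec(T^{p})$, fix the residue of the exponents modulo $p$ after passing to a subsequence, and compose the approximate returns $p$ times so that the accumulated exponent is divisible by $p$, finally invoking Theorem~\ref{th1} to pass between vectors and operators. Your inductive bookkeeping with the explicit $1/k$ error control is in fact a cleaner rendering of the paper's iteration inside a fixed neighborhood of $x$, but the underlying idea is the same.
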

\begin{proof}
We will prove that $SRec(T)=SRec(T^p)$,
for that it is enough to show that $SRec(T)\subset SRec(T^p)$.
Let $x$ be a $T$-super-recurrent vector, then there exist a strictly increasing sequence $(k_n)_{n\in\mathbb{N}}$ of positive integers and a sequence $(\lambda_n)_{n\in\mathbb{N}}$ of complex numbers such that
$\lambda_n T^{k_n}x\longrightarrow x$ as $n\longrightarrow+\infty$.
Without loss of generality we may suppose that $k_n>p$ for all $n$.
Hence, for all $n$, there exist $\ell_n\in \mathbb{N}$ and $v_n\in\{0,\dots,p-1\}$ such that
$$ k_n=p\ell_n+v_n. $$
Since $(v_n)_{n}$ is bounded, there exists $v\in \{0,\dots,p-1\}$ and a subsequence of $(v_n)_{n}$ which converges to $v$.
Thus, $\lambda_{k_n}T^{p\ell_n+v}x\longrightarrow x$ for some subsequence of $(\ell_n)_{\in \mathbb{N}}$ and a subsequence $(\lambda_{k_n})_{\in \mathbb{N}}$
which we call them again $(\ell_n)_{\in \mathbb{N}}$ and $(\lambda_{k_n})_{\in \mathbb{N}}$.
Let $U$ be a nonempty open subset of $X$ such that $x\in U$.
Since $\lambda_{k_n}T^{p\ell_n+v}x\longrightarrow x$, there exists a positive integer $m_1:=\ell_{n_1}$ such that
$\lambda_{n_1}T^{pm_1+v}x\in U.$
We have
$$ \lambda_{k_n}\lambda_{n_1}T^{p(\ell_n+m_1)+2v}x=\lambda_{n_k}\lambda_{n_1}T^{p\ell_n+v}T^{pm_1+v}x\longrightarrow  \lambda_{n_1}T^{pm_1+v}x\in U.$$
Thus, we can find a positive integer $m_2:=m_1+\ell_{n_2}>m_1$ such that
$ \lambda_{n_1}\lambda_{n_2}T^{pm_2+2v}x\in U.$
Continuing inductively we can find a positive integer $m_p=m_{p-1}+\ell_{n_p}$ such that
$$ \lambda_{n_1}\dots\lambda_{n_p}T^{pm_p+pv}x\in U. $$
Put $\lambda=\lambda_{n_1}\dots\lambda_{n_p}$, then $\lambda (T^p)^{m_p+v}x\in U$,
which means that $x$ is $T^p$-super-recurrent.
Hence, $SRec(T)=SRec(T^p)$.
Now it suffices to use Theorem \ref{th1} to conclude the result.
\end{proof}
\section{Spectral Proprieties of Super-recurrent Operators}
In this section, we show that super-recurrent operators have some noteworthy spectral proprieties.

If $T$ is hypercyclic, then Kitai \cite{Kitai} showed that every component of the spectrum of $T$ must intersects the unit circle.
Later, N. S. Feldman, V. G. Miller, and T. L. Miller gave a similar result for the supercyclicity case.
They proved that if $T$ is supercyclic,
then there exists $R>0$ such that the circle $\{z\in\mathbb{C} \mbox{ : }\vert z \vert=R\}$,
called a supercyclicity circle for $T$, intersects each component of the spectrum of $T$,
see \cite[Theorem 1.24]{BM} or \cite{FMM}.
Recently, G. Costakis, A. Manoussos, and I. Parissis \cite{CMP} proved that the spectrum of recurrent operators share
the same propriety with hypercyclic operators by proven that
if $T$ is recurrent, then every component of the spectrum of $T$ intersects the unit circle.
Since super-recurrent operators "look like" supercyclic operators, it is expected that their spectrums share the same propriety.
This is the objective of the next theorem.
\begin{theorem}\label{t1}
Let $T$ be an operator acting on a complex Banach space $X.$
If $T$ is super-recurrent, then there exists $R>0$ such that each connected component of the spectrum of $T$ intersects the circle $\{z\in\mathbb{C} \mbox{ : }\vert z \vert=R\}$.
\end{theorem}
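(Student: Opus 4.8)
The plan is to follow the pattern of the supercyclic spectral theorem of Feldman, Miller and Miller recalled above: the heart of the argument will be a spectral‑radius inequality forced by super‑recurrence, fed into the Riesz decomposition of $\sigma(T)$ along clopen subsets. For a connected component $C$ of $\sigma(T)$, set $\Lambda(C)=\big[\min_{z\in C}|z|,\ \max_{z\in C}|z|\big]$; since $C$ is connected and compact this is a closed subinterval of $[0,r(T)]$, where $r(T)$ is the spectral radius, and $C$ meets $\{z:|z|=R\}$ if and only if $R\in\Lambda(C)$. Thus the conclusion says precisely that $(0,\infty)\cap\bigcap_{C}\Lambda(C)\neq\emptyset$. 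Because the $\Lambda(C)$ are closed subintervals of the compact interval $[0,r(T)]$, one‑dimensional Helly together with compactness shows that $\bigcap_C\Lambda(C)=\emptyset$ forces two of them to be disjoint, while $\bigcap_C\Lambda(C)=\{0\}$ forces $0\in C$ for every component $C$, hence $\sigma(T)$ connected and equal to $\{0\}$. This last possibility I would rule out at the outset: a nonzero quasinilpotent operator is not super‑recurrent. (If it were, a super‑recurrent vector $x$ would give $\mu_kT^{n_k}x\to x$ while $\|T^{n_k}x\|\le\|T^{n_k}\|\,\|x\|\to0$, so $|\mu_k|\to\infty$ and $T^{n_k}x-\mu_k^{-1}x\to0$; applying powers of $T$ and using density one gets $\mu_kT^{n_k}\to\mathrm{id}$ on the linear span of the orbit of $x$, incompatible with $\sigma(T)=\{0\}$ --- for $\dim X<\infty$ this is immediate.) Hence if the conclusion fails there are components $C_1,C_2$ with $\Lambda(C_1)\cap\Lambda(C_2)=\emptyset$, say $\max_{z\in C_1}|z|<\rho<\min_{z\in C_2}|z|$ for some $\rho>0$.

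Next I would separate $C_1$ and $C_2$ by clopen spectral sets lying on the two sides of $\{|z|=\rho\}$. Since $C_1\subseteq\{|z|<\rho\}$, the set $\sigma(T)\cap\{|z|\geq\rho\}$ is compact and disjoint from the component $C_1$, and $C_1$ is the intersection of all sets clopen in $\sigma(T)$ that contain it, so a finite‑intersection argument produces a clopen set $A$ with $C_1\subseteq A\subseteq\{|z|<\rho\}$; symmetrically a clopen set $B$ with $C_2\subseteq B\subseteq\{|z|>\rho\}$. Then $A\cap B=\emptyset$, and $\{A,B,\sigma(T)\setminus(A\cup B)\}$ is a clopen partition of $\sigma(T)$, so by the Riesz functional calculus $X=X_A\oplus X_B\oplus X_0$ with each summand $T$‑invariant, $\sigma(T|_{X_A})=A$ and $\sigma(T|_{X_B})=B$. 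Thus $T$ is similar to $(T|_{X_A}\oplus T|_{X_B})\oplus T|_{X_0}$; as super‑recurrence is invariant under similarity, the latter is super‑recurrent, and by Proposition \ref{rbi} so is its direct summand $R_1\oplus R_2$, where $R_1:=T|_{X_A}$ and $R_2:=T|_{X_B}$. Here $X_A,X_B\neq\{0\}$, $r(R_1)=\max_{z\in A}|z|<\rho<\min_{z\in B}|z|$, and $0\notin B=\sigma(R_2)$, so $R_2$ is invertible with $r(R_2^{-1})=\big(\min_{z\in B}|z|\big)^{-1}$; in particular $r(R_1)\,r(R_2^{-1})<1$.

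The crux is to contradict super‑recurrence of $R_1\oplus R_2$. By Theorem \ref{th1} its super‑recurrent vectors are dense in $X_A\oplus X_B$, and $\{(u,v):u\neq0,\ v\neq0\}$ is open and dense, so there is a super‑recurrent vector $(u_0,v_0)$ with $u_0,v_0\neq0$; take $(n_k)$ strictly increasing and scalars $\mu_k$ with $\mu_kR_1^{n_k}u_0\to u_0$ and $\mu_kR_2^{n_k}v_0\to v_0$. For all large $k$, from $\tfrac12\|u_0\|\le\|\mu_kR_1^{n_k}u_0\|\le|\mu_k|\,\|R_1^{n_k}\|\,\|u_0\|$ we get $|\mu_k|\ge\big(2\|R_1^{n_k}\|\big)^{-1}$, while from $\|v_0\|\le\|R_2^{-n_k}\|\,\|R_2^{n_k}v_0\|$ and $\|\mu_kR_2^{n_k}v_0\|\le2\|v_0\|$ we get $|\mu_k|\le2\|R_2^{-n_k}\|$; hence $\|R_1^{n_k}\|\,\|R_2^{-n_k}\|\ge\tfrac14$ for all large $k$. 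But by Gelfand's formula $\big(\|R_1^{n_k}\|\,\|R_2^{-n_k}\|\big)^{1/n_k}\to r(R_1)\,r(R_2^{-1})<1$, so $\|R_1^{n_k}\|\,\|R_2^{-n_k}\|\to0$ --- a contradiction. Therefore $(0,\infty)\cap\bigcap_C\Lambda(C)\neq\emptyset$, and any $R$ in this set works: every component $C$ of $\sigma(T)$ satisfies $\min_{z\in C}|z|\le R\le\max_{z\in C}|z|$, that is, meets $\{z:|z|=R\}$.

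I expect the implication $r(R_1)\,r(R_2^{-1})<1\Rightarrow R_1\oplus R_2$ not super‑recurrent to be the real content: this is the only place super‑recurrence is used essentially --- through density of super‑recurrent vectors (Theorem \ref{th1}) and stability under passing to direct summands (Proposition \ref{rbi}) --- and it is the super‑recurrent counterpart of the Feldman--Miller--Miller estimate. The two ancillary matters are the topological separation step of the second paragraph, which is routine once one uses that components are intersections of clopen sets in a compact space, and the exclusion of the quasinilpotent case, which genuinely needs a separate argument (in infinite dimensions, that the normalized iterates of a quasinilpotent operator cannot recur projectively); if one prefers to avoid the dynamical argument sketched above, one isolates ``a nonzero quasinilpotent operator is not super‑recurrent'' as a preliminary lemma.
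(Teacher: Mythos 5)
Your second and third paragraphs are essentially the paper's own proof: two components forced onto opposite sides of a circle, separation by clopen spectral sets, Riesz decomposition, reduction to a direct sum via similarity invariance and Proposition \ref{rbi}, density of super-recurrent vectors via Theorem \ref{th1}, and then incompatible growth estimates on the scalars $\mu_k$. Where the paper quotes \cite[Lemma 1.21]{BM}, \cite[Lemma 1.25]{BM} and \cite[Lemma 1.20]{BM}, you reprove the corresponding facts by hand (components as intersections of clopen sets plus compactness, a Helly-type argument, and Gelfand's formula); that part is correct and self-contained.

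The genuine gap is your disposal of the case $\bigcap_C\Lambda(C)=\{0\}$: the preliminary lemma ``a nonzero quasinilpotent operator is not super-recurrent'' is false in infinite dimensions. By Hilden and Wallen \cite{HW}, every unilateral weighted backward shift on $\ell^2(\mathbb{N})$ is supercyclic, hence super-recurrent by Remarks \ref{rem}; taking weights $w_n=2^{-n}$ gives $\|B_w^k\|\le 2^{-k(k+1)/2}$, so this shift is quasinilpotent, $\sigma(B_w)=\{0\}$. Your sketch cannot be completed: from $\mu_kT^{n_k}x\to x$ and $\|T^{n_k}x\|\to0$ one only gets $|\mu_k|\to\infty$, which yields no contradiction (and cannot, in view of the example; only for $\dim X<\infty$, where quasinilpotent means nilpotent, is the claim true). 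The same example shows that the dichotomy you aim at, with $R>0$, is simply not provable: a quasinilpotent super-recurrent operator has a spectrum meeting no circle of positive radius, so the statement that can actually be established is the one with $R\ge 0$ (the degenerate circle $\{0\}$ allowed), exactly as in \cite[Theorem 1.24]{BM} for supercyclic operators. The paper's proof never meets this case because it invokes \cite[Lemma 1.25]{BM}, whose hypothesis is the failure of the conclusion for \emph{every} $R\ge0$; in effect it proves the $R\ge0$ version, and the $R>0$ claim in the statement is an overstatement rather than something your argument (or any argument) could recover. So: apart from the quasinilpotent step your proof matches the paper's, but that step is wrong and should be replaced by weakening the conclusion to $R\ge0$.
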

\begin{proof}
Assume that $T$ is super-recurrent.
We will produce by contradiction.
By \cite[Lemma 1.25]{BM}, there exist $R>0$ and $C_1$, $C_2$ two component of $\sigma(T)$ such that
$C_1\subset\mathbb{D}$ and $C_2\subset \mathbb{C}\setminus\overline{\mathbb{D}}$.
Without loss of generality, we may suppose that $R=1$. Indeed, this is since $T$ is super-recurrent if and only $R^{-1}T$ is.
By \cite[Lemma 1.21]{BM}, there exist $\sigma_1$ and $\sigma_2$, two closed and open sets of $\sigma(T)$ such that $C_1\subset \sigma_1\subset \mathbb{D}$ and $C_2\subset \sigma_2\subset\mathbb{C}\setminus\overline{\mathbb{D}}$.
Set $\sigma_3=\sigma(T)\setminus(\sigma_1\cup\sigma_2)$.
We have then $\sigma(T)=\sigma_1\cup\sigma_2\cup\sigma_3$ and the sets $\sigma_i$ are closed and pairwise
disjoint.
By Reisz decomposition theorem there exist $X_1$, $X_2$, $X_3$ and $T_1$, $T_2$, $T_3$
such that $X=X_1\oplus X_2\oplus X_2$ and $T=T_1\oplus T_2\oplus T_3$,
where each $X_i$ is a $T$-invariant subspace, $T_i=T_{/X_i}$ and $\sigma_i=\sigma(T_i)$.
Let $x\in X_1$ and $y\in X_2$.
By Theorem \ref{tt} , there exist $(\lambda_k)\subset\mathbb{C}$, $(n_k)\subset\mathbb{N}$, $(x_k)\subset X_1$ and $(y_k)\subset X_2$ such that
$$ x_k\longrightarrow x\mbox{, } \hspace{0.2cm}y_k\longrightarrow y \mbox{, } \hspace{0.2cm}
 \lambda_k T_1^{n_k}x_k\longrightarrow x  \hspace{0.2cm}\mbox{ and } \hspace{0.2cm} \lambda_k T_2^{n_k}y_k\longrightarrow y.$$
By \cite[Lemma 1.20]{BM}, the last assertion implies that $(\vert \lambda_k\vert)$ converges into $0$ and $+\infty$, which is a contradiction.
\end{proof}

The adjoint Banach operator of a hypercyclic operator cannot have eigenvalue. This means that $\sigma_p(T^*)=\emptyset$, see \cite[Proposition 1.7]{BM}.
Unlike the hypercyclicity case, the adjoint of a supercyclic operator $T$ can have an eigenvalue but not more then one.
This means that either we have $\sigma_p(T^*)=\emptyset$ or there exists $\lambda$ such that
$\sigma_p(T^*)=\{\lambda\}.$
For the recurrent operators, it is expected that they have the same result as hypercyclic operators, but
this is not the case, see \cite[Example 2.13 and Remark 2.15]{CMP}.
So the Banach adjoint operator of a recurrent operator may has eigenvalue.
However, no one of those eigenvalue can be outside of the unit circle.
This means that $\sigma_p(T^*)\subset \mathbb{T}$, where $\mathbb{T}$ the unit circle.
Since recurrent operators are super-recurrent, it follows that some super-recurrent operators may have eigenvalue.
However, all those eigenvalues lie in a circle of form $\{z\in\mathbb{C} \mbox{ : }\vert z \vert=R\}$, where $R>0$.
This is the content of the next result.
\begin{theorem}\label{tt1}
The eigenvalues of the adjoint operator of a super-recurrent operator have the same argument.
That is,
if $T$ is super-recurrent, then there exists $R>0$ such that $\sigma_p(T^{*})\subset \{z\in\mathbb{C} \mbox{ : }\vert z \vert=R\}$. In particular, for all $\lambda\in\mathbb{C}\setminus\{z\in\mathbb{C} \mbox{ : }\vert z \vert=R\}$ the operator $T-\lambda I$ has dense range.
\end{theorem}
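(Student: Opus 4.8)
The plan is to play an eigenvector of $T^{*}$ against the approximation scheme of Theorem \ref{tt}. Suppose $\lambda\in\sigma_p(T^{*})$ and fix $\phi\in X^{*}\setminus\{0\}$ with $T^{*}\phi=\lambda\phi$; then $\langle\phi,T^{n}z\rangle=\lambda^{n}\langle\phi,z\rangle$ for every $z\in X$ and $n\geq0$. Fix any $x\in X$ with $\langle\phi,x\rangle\neq0$ and apply Theorem \ref{tt}$(2)$ to obtain $(n_k)\subset\mathbb{N}$, $(x_{n_k})\subset X$ and nonzero scalars $(\lambda_{n_k})$ with $x_{n_k}\to x$ and $\lambda_{n_k}T^{n_k}x_{n_k}\to x$. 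Applying the continuous functional $\phi$ together with the eigenrelation gives $\langle\phi,x_{n_k}\rangle\to\langle\phi,x\rangle\neq0$ and $\lambda_{n_k}\lambda^{n_k}\langle\phi,x_{n_k}\rangle\to\langle\phi,x\rangle$, so dividing (legitimate for large $k$) yields $\lambda_{n_k}\lambda^{n_k}\to1$. In particular $\lambda\neq0$, which also follows at once from Remark \ref{rem dense}.

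Next, suppose toward a contradiction that $\lambda,\mu\in\sigma_p(T^{*})$ with $|\lambda|\neq|\mu|$, and let $\phi,\psi\in X^{*}\setminus\{0\}$ be corresponding eigenvectors of $T^{*}$. Since $\ker\phi$ and $\ker\psi$ are proper subspaces of $X$ and a vector space is never the union of two proper subspaces, there is a single vector $x$ with $\langle\phi,x\rangle\neq0$ and $\langle\psi,x\rangle\neq0$. Applying Theorem \ref{tt}$(2)$ to this one $x$ produces one triple $(n_k),(x_{n_k}),(\lambda_{n_k})$ serving both functionals at once, whence $\lambda_{n_k}\lambda^{n_k}\to1$ and $\lambda_{n_k}\mu^{n_k}\to1$, so $(\lambda/\mu)^{n_k}\to1$. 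Passing to a subsequence, $(n_k)$ is either constant $\equiv N$, giving $(\lambda/\mu)^{N}=1$ hence $|\lambda/\mu|=1$, contrary to $|\lambda|\neq|\mu|$; or $n_k\to\infty$, in which case $|\lambda/\mu|^{n_k}\to1$ is impossible since $|\lambda/\mu|\neq1$ forces the left side to $0$ or $+\infty$. Either way we reach a contradiction. Hence all elements of $\sigma_p(T^{*})$ share one modulus; the conclusion is vacuous if $\sigma_p(T^{*})=\emptyset$, and otherwise $R:=|\lambda_0|>0$ for any $\lambda_0\in\sigma_p(T^{*})$, so $\sigma_p(T^{*})\subset\{z\in\mathbb{C}:|z|=R\}$.

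For the last assertion, recall that a bounded operator $S$ on a Banach space has dense range if and only if $S^{*}$ is injective (Hahn--Banach). Since $(T-\lambda I)^{*}=T^{*}-\lambda I$ is injective exactly when $\lambda\notin\sigma_p(T^{*})$, and $\sigma_p(T^{*})\subset\{|z|=R\}$, every $\lambda$ with $|\lambda|\neq R$ makes $T-\lambda I$ of dense range.

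The only delicate point is producing a single test vector $x$ on which both eigenfunctionals are nonzero, so that one invocation of Theorem \ref{tt} controls $\lambda$ and $\mu$ simultaneously; everything else is a short continuity computation, parallel to the supercyclic case in \cite{BM,FMM}.
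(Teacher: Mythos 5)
Your proof is correct, and it runs on the same engine as the paper's: pair one super-recurrence approximation against two eigenvectors of $T^{*}$ and derive incompatible behaviour of the scalar sequence. The supporting ingredients differ, though. The paper fixes $m$ with $\vert\mu\vert<m<\vert\lambda\vert$, passes to $\frac{1}{m}T$, and picks a genuine super-recurrent vector $z_{0}$ outside $\ker x^{*}\cup\ker y^{*}$; this leans on Theorem \ref{th1} (density of $SRec(T)$, itself a Baire-category argument), and the strictly increasing exponents built into the definition of a super-recurrent vector give $n_{k}\to\infty$ for free, so $\vert\beta_{k}\vert\to0$ and $\vert\beta_{k}\vert\to\infty$ follow at once. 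You instead use only the elementary equivalence of Theorem \ref{tt}(2), choose the test vector off both kernels by the purely algebraic fact that a complex vector space is not the union of two proper subspaces, and divide the two limits to obtain $(\lambda/\mu)^{n_{k}}\to1$; since Theorem \ref{tt}(2) does not force $n_{k}\to\infty$, you correctly add the bounded/unbounded subsequence dichotomy, a point the paper never has to confront. The net effect is that your argument is a bit more self-contained (no appeal to Theorem \ref{th1} or to Baire category) and it also makes explicit why $0\notin\sigma_{p}(T^{*})$ and why the dense-range conclusion follows from injectivity of $(T-\lambda I)^{*}$, which the paper leaves implicit; the paper's route is marginally shorter once Theorem \ref{th1} is in hand.
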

\begin{proof}
Assume that there exist $\lambda$, $\mu\in \sigma_p(T^{*})$ such that $\vert\mu\vert<\vert\lambda\vert$ and let $m$ be a nonzero real number such that $\vert\mu\vert<m<\vert\lambda\vert$.
Since $\lambda$, $\mu\in \sigma_p(T^{*})$, there exist $x^{*}$, $y^{*}\in X^{*}$ such that $T^{*} x^{*}=\lambda x^{*}$ and $T^{*} y ^{*}=\mu y^{*}$.
This implies that $x^{*}(T^nz)=\lambda^n x^{*}(z)$ and $y^{*}(T^nz)=\mu^n y^{*}(z)$ for all $z\in X$.
Since $T$ is super-recurrent if and only $\frac{1}{m}T$ is, let $z_0\in SRec(\frac{1}{m}T).$
By Baire Category Theorem we may suppose that $x^{*}(z_0)\neq0$ and $y^{*}(z_0)\neq0$.
Since $z_0$ is a super-recurrent vector for $\frac{1}{m}T$,
it follows that there exist $(\beta_{k})\subset\mathbb{C}$ and $(n_k)\subset \mathbb{N}$
such that $\beta_k \frac{1}{m^{n_k}}T^{n_k}z_0\longrightarrow z_0$ as $k\longrightarrow\infty$.
Since $x^{*}$ and $y^{*}$ are continuous, we deduce that
$$\beta_k \left( \frac{\lambda}{m} \right)^{n_k}x^{*}(z_0)\longrightarrow x^{*}(z_0)
\hspace{0.2cm}\mbox{ and } \hspace{0.2cm}
\beta_k \left( \frac{\mu}{m} \right)^{n_k}y^{*}(z_0)\longrightarrow y^{*}(z_0).$$
Using that $x^{*}(z_0)\neq0$ and $y^{*}(z_0)\neq0$ we conclude that
$\beta_k \left( \frac{\lambda}{m} \right)^{n_k}\longrightarrow1$
and
$\beta_k \left( \frac{\mu}{m} \right)^{n_k}\longrightarrow1$
Hence $\vert \beta_k\vert \longrightarrow 0$ and $\vert \beta_k\vert \longrightarrow \infty$, which is a contradiction.
\end{proof}
\begin{remark}
If $T$ is supercyclic, then $T$ is super-recurrent, but either $\sigma_p(T^{*})=\emptyset$ or $\sigma_p(T^{*})=\{\lambda\}$ for some nonzero number $\lambda$.
However, there exist several super-recurrent operators such that $Card(\sigma_p(T^{*}))>1$.
Indeed, let $(\lambda_n)_{n\in\mathbb{N}}$ be a sequence of nonzero complex numbers of the same argument.
Define in $\ell^2(\mathbb{N})$ an operator $T$ by
$$T(x_1,x_2,\dots)=(\lambda x_1,\lambda_2  x_2,\dots).$$
Then $T$ is a super-recurrent operator.
It's easy to check that $(\overline{\lambda_n})_{n\in\mathbb{N}}\subset\sigma_p(T^{*})$ and hence $\sigma_p(T^{*})$ is an infinite set.
\end{remark}

We already know that if $T$ is supercyclic, then either $\sigma_p(T^{*})=\emptyset$ or $\sigma_p(T^{*})=\{\lambda\}$ for some nonzero number $\lambda$.
Moreover, in the latter case, one can find a $T$-invariant hyperplane $X_0\subset X$ such that the operator
$T_0:=T_{/X_0}$ is hypercyclic on $X_0$, see \cite[Proposition 1.26]{BM}.
In the next theorem, we prove that the same relation still true between recurrent and super-recurrent operators.
\begin{theorem}
Let $X$ be a Banach space with dim$(X)>1$. Let $T$ be a super-recurrent operator acting on $X$.
Then for all $\lambda\in\sigma_p(T^{*})$, there exists a $($closed$)$ $T$-invariant hyperplane $X_0\subset X$ such that $T_0:=\lambda^{-1}T_{/X_0}$ is recurrent on $X_0.$
\end{theorem}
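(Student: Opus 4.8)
The plan is to mimic the proof of the supercyclic analogue \cite[Proposition 1.26]{BM}, replacing ``hypercyclic'' by ``recurrent'' throughout. Fix $\lambda\in\sigma_p(T^*)$ and choose $x^*\in X^*\setminus\{0\}$ with $T^*x^*=\lambda x^*$. Set $X_0:=\ker x^*$; this is a closed hyperplane, and since $x^*(Tz)=(T^*x^*)(z)=\lambda\, x^*(z)$, the subspace $X_0$ is $T$-invariant, so $T_0:=\lambda^{-1}T_{/X_0}$ is a well-defined operator on $X_0$. It remains to show $T_0$ is recurrent on $X_0$, for which by the density criterion (the recurrent analogue, i.e. $T_0$ recurrent iff $Rec(T_0)$ dense) it suffices to show that every nonempty open $U\subseteq X_0$ satisfies $T_0^n(U)\cap U\neq\emptyset$ for some $n\geq 1$ — equivalently, that for every $u\in X_0$ and every neighborhood $W$ of $0$ in $X_0$ there are $z\in X_0$ and $n\geq 1$ with $z-u\in W$ and $T_0^n z - u\in W$.

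First I would extend this open set back to $X$: given $u\in X_0$ and $\varepsilon>0$, consider the open ball $B(u,\varepsilon)$ in $X$. Since $T$ is super-recurrent, by Theorem \ref{tt}(2) applied at the point $u$ there exist sequences $(n_k)\subset\mathbb{N}$, $(u_k)\subset X$ and nonzero $(\mu_k)\subset\mathbb{C}$ with $u_k\to u$ and $\mu_k T^{n_k}u_k\to u$. Applying the functional $x^*$ and using $x^*(T^{n_k}u_k)=\lambda^{n_k}x^*(u_k)$ together with $x^*(u)=0$ gives $x^*(u_k)\to 0$ and $\mu_k\lambda^{n_k}x^*(u_k)\to 0$. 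The key point is then to correct the vectors $u_k$ so they lie exactly in $X_0=\ker x^*$: pick any $v\in X$ with $x^*(v)=1$ and set $z_k:=u_k - x^*(u_k)\,v\in X_0$; since $x^*(u_k)\to 0$ we get $z_k\to u$ in $X_0$. Now examine $\mu_k\lambda^{n_k} T_0^{n_k}$ applied... but $\mu_k\lambda^{n_k}$ is a scalar that a priori need not go to $1$, so one more normalization is needed: replace $\mu_k$ by the scalar $\mu_k' := \mu_k\lambda^{n_k}\cdot(\mu_k\lambda^{n_k})^{-1}$ — no — rather, observe that $\lambda^{-n_k}T^{n_k} = T_0^{n_k}$ on $X_0$, so $\mu_k\lambda^{n_k}\cdot \lambda^{-n_k}T^{n_k}z_k = \mu_k T^{n_k}z_k$; since $z_k - u_k = -x^*(u_k)v\to 0$ and $T^{n_k}$ need not be bounded uniformly, one instead works directly: $\mu_k T^{n_k}z_k = \mu_k T^{n_k}u_k - \mu_k x^*(u_k) T^{n_k}v$, and here $\mu_k x^*(u_k) = \mu_k\lambda^{n_k}x^*(u_k)\cdot\lambda^{-n_k}$; this is the delicate term.

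To handle it cleanly I would instead argue as in \cite{BM}: define the ``small'' scalars $\gamma_k := \mu_k\lambda^{n_k}$, which satisfy $\gamma_k x^*(u_k)\to 0$, and rewrite everything in terms of $T_0$ on $X_0$. Since on $X_0$ we have $\lambda^{-n}T^n = T_0^n$, the statement $\mu_k T^{n_k}u_k\to u$ with $u,u_k$ replaced by their projections $z_k\in X_0$ becomes, after dividing by $\lambda^{n_k}$ and multiplying by $\lambda^{n_k}$ appropriately, $\gamma_k T_0^{n_k} z_k \to u$ modulo an error controlled by $\gamma_k x^*(u_k)\,T_0^{n_k}v_0$ where $v_0$ is the $X_0$-component of $T v$; rerunning the convergence argument of Theorem \ref{tt} shows this error tends to $0$. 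This gives $z_k\to u$ and $\gamma_k T_0^{n_k}z_k\to u$ in $X_0$ with $n_k\to\infty$; that is exactly condition (2) of Theorem \ref{tt} for $T_0$, so $T_0$ is super-recurrent. The remaining task — upgrading ``super-recurrent'' to ``recurrent'' — uses that the scalars $\gamma_k$ are pinned down: since $z_k\to u\neq 0$ (choosing $u\neq 0$, which we may, as $\dim X_0\geq 1$) and $\gamma_k T_0^{n_k}z_k\to u$ with $T_0^{n_k}z_k$ bounded, we get $\gamma_k\to 1$ after passing to a subsequence, whence $T_0^{n_k}z_k\to u$ and, absorbing the error, $T_0^{n_k}u\to u$; running this for a dense set of $u\in X_0$ (and $u=0$ trivially) shows $Rec(T_0)$ is dense, so $T_0$ is recurrent on $X_0$.

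\textbf{Main obstacle.} The crux is the bookkeeping in the second and third paragraphs: showing that after projecting the approximating vectors $u_k$ onto the hyperplane $X_0$ and rescaling by $\gamma_k=\mu_k\lambda^{n_k}$, the error terms (coming from $x^*(u_k)\neq 0$ and from the unbounded iterates $T_0^{n_k}$) genuinely vanish, and that the scalars $\gamma_k$ can be shown to converge to $1$ rather than merely staying bounded away from $0$ and $\infty$. One must argue this carefully — as in the proof of \cite[Proposition 1.26]{BM} — rather than by a direct estimate, since $\|T^{n_k}\|$ is uncontrolled; the trick, as there, is to keep the errors expressed in terms of quantities already known to converge ($x^*(u_k)\to 0$, $\mu_k T^{n_k}u_k\to u$, $\mu_k\lambda^{n_k}x^*(u_k)\to 0$) rather than trying to bound $T^{n_k}$ itself.
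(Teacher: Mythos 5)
There is a genuine gap, and it sits exactly where you flagged it. Your plan hinges on showing that the error term $\mu_k x^*(u_k)T^{n_k}v$ (in your later notation, $\gamma_k x^*(u_k)T_0^{n_k}v_0$) tends to zero, but nothing in your argument gives this: you only know $\gamma_k x^*(u_k)\to 0$ with no rate, while $\Vert T_0^{n_k}v_0\Vert$ is completely uncontrolled, and ``rerunning the convergence argument of Theorem \ref{tt}'' produces no such control. The subsequent step is also circular: you deduce $\gamma_k\to 1$ from ``$T_0^{n_k}z_k$ bounded'', but boundedness of these iterates is essentially what you are trying to establish. Moreover your route aims at the stronger conclusion that a dense set of $u\in X_0$ satisfies $T_0^{n_k}u\to u$ directly, which is more than the actual proof of the theorem delivers.

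The paper's proof avoids the projection-and-error-estimate strategy altogether. After reducing to $\lambda=1$, one writes $X=\mathbb{C}\oplus X_0$ with $T(1\oplus z)=1\oplus(y+T_0z)$, whence $T^n(1\oplus z)=1\oplus(y+T_0y+\cdots+T_0^{n-1}y+T_0^nz)$. The scalars are pinned not by an estimate but by normalization: since $SRec(T)$ is dense and stable under nonzero scalar multiples, one may take a super-recurrent vector of the form $1\oplus x$; the first coordinate of $\mu_kT^{n_k}(1\oplus x)$ is $\mu_k$, so $\mu_k\to1$ with no bound on $\Vert T^{n_k}\Vert$ needed. The unbounded ``cocycle'' term $y+T_0y+\cdots+T_0^{n_k-1}y$ is never shown to be small; instead one applies $T_0-I$ and telescopes, getting $T_0^{n_k}\bigl(y+(T_0-I)x\bigr)\to y+(T_0-I)x$, i.e.\ recurrent vectors of the special form $y+(T_0-I)x$ only. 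Density of $Rec(T_0)$ then follows from a separate lemma — $T_0-I$ has dense range — proved via Hahn--Banach together with a further use of super-recurrence; this lemma is absent from your proposal but indispensable (it is the analogue of the corresponding step in \cite[Proposition 1.26]{BM}). So the two missing ideas are the normalization $1\oplus x$ forcing $\mu_k\to1$, and the $(T_0-I)$-telescoping plus dense-range argument, which replace any attempt to make the error term vanish.
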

\begin{proof}
First note that $\lambda\neq0$ for every $\lambda\in \sigma_p(T^{*})$ since a super-recurrent operator has dense range.

Since $T$ is super-recurrent if and only if $aT$ is super-recurrent for every $a\neq0$, we may assume, without loss of generality, that $\lambda=1.$
Choose $x_0^{*}\in X^{*}\setminus\{0\}$ such that $T^{*}x_0^{*}=x_0^{*}$ and let $X_0=Ker(x_0^{*})$.
Since $x_0^{*}$ is an eigenvector of $T^{*}$, it follows that $X_0$ is a $T$-invariant hyperplane of $X$.
We can consider then $T_0:=T_{/X_0}$.
In the following, we will prove that $T_0$ is a recurrent operator on $X_0$.

With a slight abuse of notation, we may write $X=\mathbb{C}\oplus X_0$ and since $T^{*}x_0^{*}=x_0^{*}$, let $T(1\oplus 0)=1\oplus y$ for some $y\in X_0$.
It follows then that $T(1\oplus z)=1\oplus (y+T_0(z)$ for all $z\in X_0$.
By straightforward induction, we have
$$ T^n(1\oplus z)=1\oplus(y+T_0(y)+\dots+T_0^{n-1}(y)+T_0^n(z)) $$
for all $z\in X_0.$

Note that $T_0-I$ has dense range. Indeed, assume that $\overline{(T_0-I)(X_0)}\neq X_0$ and without loss of generality we may suppose that $y\notin \overline{(T_0-I)(X_0)}.$
By the Hahn-Banach theorem, there exists $k^{*}\in X_0^{*}$ such that $k^{*}(y)\neq 0$ and $k^{*}(T^nz)=k^{*}(z)$ for every $z\in X_0$.
Choose a super-recurrent vector for $T$ of the form $1\oplus x_0$.
Hence there exist $(\mu_k)\subset\mathbb{C}$ and a strictly increasing sequence $(n_k)\subset \mathbb{N}$
such that
$\mu_k T^{n_k}(1\oplus x_0)\longrightarrow 1\oplus x_0$ as $k\longrightarrow\infty.$
Thus
$$ \mu_k( 1\oplus(y+T_0(y)+\dots+T_0^{n-1}(y)+T_0^n(x_0)) )\longrightarrow 1\oplus x_0. $$
This implies that $\mu_k\longrightarrow1$ and $y+T_0(y)+\dots+T_0^{n_k-1}(y)+T_0^{n_k}(x_0))\longrightarrow x_0$.
Since $k^{*}$ is continuous and $k^{*}(y)\neq0$, it follows that $n_k-1\longrightarrow0$, which is a contradiction.

Since $T$ is super-recurrent, there exist a subset $A$ of $\mathbb{C}$ and a subset $B$ of $X_0$ such that. $SRec(T)=A\oplus B$ such that $\overline{A}=\mathbb{C}$ and $\overline{B}=X_0$.

Finally, let $x$ be an element of $B$. By the same method applied to $x_0$, we have
$$y+T_0(y)+\dots+T_0^{n-1}(y)+T_0^n(x))\longrightarrow x.$$
Applying $(T_0-I)$, we get
$$ T^{n_k}(y+(T_0-I)x)\longrightarrow (y+(T_0-I)x.$$
This implies that $(y+(T_0-I)x\in Rec(T_0)$.
Since $(T_0-I)$ has dense range, we conclude that $T_0$ is recurrent on $X_0.$
\end{proof}
The Purpose of the following proposition is to show that a large supply of eigenvectors corresponding
to eigenvalues with same argument implies that the operator is super-recurrent.
\begin{proposition}
Let $T$ be an operator acting on $X$.
If there exists $R>0$ such that the space generated by
$$ X_0:=\{x\in X \mbox{ : } Tx=\lambda x \mbox{ for some }\lambda\in\{\vert \lambda\vert=R\}\} $$
is dense in $X$, then $T$ is super-recurrent.
\end{proposition}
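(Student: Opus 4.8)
The plan is to produce a dense set of super-recurrent vectors and then invoke Theorem \ref{th1}. The obvious candidate is $\lspan(X_0)$ itself, which is dense by hypothesis, so it suffices to show that every \emph{nonzero} finite linear combination of eigenvectors whose eigenvalues lie on $\{z\in\mathbb{C}:|z|=R\}$ is a super-recurrent vector for $T$.

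First I would record the one-dimensional case. If $Tx=\lambda x$ with $|\lambda|=R$, then $R^{-1}\lambda$ lies on the unit circle $\mathbb{T}$, so there is a strictly increasing sequence $(n_k)$ with $(R^{-1}\lambda)^{n_k}\to 1$; setting $\mu_{n_k}=R^{-n_k}$ gives $\mu_{n_k}T^{n_k}x=(R^{-1}\lambda)^{n_k}x\to x$, so $x\in SRec(T)$. The key observation is that the scalars $\mu_{n_k}=R^{-n_k}$ depend only on the index $n_k$ and not on the particular eigenvector; only the choice of subsequence $(n_k)$ depends on $\lambda$.

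The main step is the simultaneous version of this. Given eigenvectors $x_1,\dots,x_m$ with $Tx_j=\lambda_j x_j$, $|\lambda_j|=R$, write $\theta_j=R^{-1}\lambda_j\in\mathbb{T}$. I claim there is a strictly increasing sequence $(n_k)$ of positive integers with $\theta_j^{\,n_k}\to 1$ for \emph{all} $j=1,\dots,m$ at once. This is the standard recurrence of rotations on the torus $\mathbb{T}^m$: for fixed $\varepsilon>0$ the sequence $\bigl((\theta_1^{\,n},\dots,\theta_m^{\,n})\bigr)_{n\ge 1}$ lies in the compact set $\mathbb{T}^m$, so by the pigeonhole principle there are $p<q$ with $|\theta_j^{\,p}-\theta_j^{\,q}|<\varepsilon$ for every $j$; multiplying by $\theta_j^{-p}$, which has modulus $1$, gives $|\theta_j^{\,q-p}-1|<\varepsilon$ for every $j$. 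Taking $\varepsilon=1/k$ and discarding finitely many terms yields the desired strictly increasing $(n_k)$. With this sequence and $\mu_{n_k}=R^{-n_k}$ we obtain, for $x=\sum_{j=1}^m c_j x_j$,
$$\mu_{n_k}T^{n_k}x=\sum_{j=1}^m c_j\,\theta_j^{\,n_k}x_j\longrightarrow\sum_{j=1}^m c_j x_j=x,$$
so $x\in SRec(T)$ whenever $x\neq 0$.

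Consequently $\lspan(X_0)\setminus\{0\}\subset SRec(T)$; since $\lspan(X_0)$ is dense in $X$, the operator $T$ admits a dense set of super-recurrent vectors, and Theorem \ref{th1} gives that $T$ is super-recurrent. The only genuinely nontrivial ingredient is the simultaneous torus-recurrence claim of the previous paragraph; everything else is bookkeeping. One small point to be careful about: $SRec(T)$ is defined to consist of nonzero vectors, so one works with $\lspan(X_0)\setminus\{0\}$, which is still dense in $X$.
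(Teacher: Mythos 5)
Your proposal is correct and follows essentially the same route as the paper: show that every nonzero element of $\lspan(X_0)$ is super-recurrent by choosing a sequence $(n_k)$ with $(R^{-1}\lambda_j)^{n_k}\to 1$ simultaneously for all eigenvalues involved, take scalars $R^{-n_k}$, and conclude via the density criterion of Theorem \ref{th1}. The only difference is that you explicitly justify the simultaneous recurrence of the rotations $(\theta_1^n,\dots,\theta_m^n)$ on $\mathbb{T}^m$ by a pigeonhole argument, a step the paper simply asserts, and you note the harmless exclusion of the zero vector.
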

\begin{proof}
Let $\sum_{i=1}^n a_i x_i\in$ span $\{X_0\}$, where $Tx_i=\lambda_i x_i$, for certain $a_i$, $\lambda_i\in\mathbb{C}$
with $\vert \lambda_i\vert=R$ for $i=1,\dots,n.$
Since each $R^{-1}\lambda_i$ is in the unite circle,
it follows that there exists a strictly increasing sequence $(n_k)$ such that $\left( R^{-1}\lambda_i\right)^{n_k}\longrightarrow 1$ as $k\longrightarrow\infty.$
Hence 
$$ R^{-n_k}T^{n_k}\left( \sum_{i=1}^n a_i x_i \right)=\sum_{i=1}^n a_i R^{-n_k} \lambda_i x_i\longrightarrow  \sum_{i=1}^n a_i x_i$$
 as $k\longrightarrow\infty.$
 This means that span$\{X_0\}\subset SRec(T)$.
 Since span$\{X_0\}$ is dense in $X$, it follows that $T$ is super-recurrent.
\end{proof}


\begin{thebibliography}{00}
\bibitem{Ak}
E.Akin.
 Recurrence in topological dynamics.
 The University Series in Mathematics. Plenum Press, New York, 1997.
 Furstenberg families and Ellis actions.
\bibitem{Ansari}  Ansari SI. Hypercyclic and cyclic vectors. J. Funct. Anal. 1995;128:374-383.
\bibitem{BM} Bayart F. Matheron E. Dynamics of linear operators. 2009;
New York, NY, USA, Cambridge University Press, 2009.
\bibitem{Birkhoff}  Birkhoff GD. Surface transformations and their dynamical applications. Acta Math. 1922;43:1-119.
 
\bibitem{BGLP} Bonilla A. Grosse-Erdmann K-G. L\'{o}pez-Mart\'{i}nez A. Peris A.
Frequently recurrent operators.
arXiv:2006.11428v1 [math.FA] 19 Jun 2020.
\bibitem{GM} R. Cardeccia and S. Muro, Arithmetic progressions and chaos in linear dynamics,
arXiv:2003.07161 (2020).
\bibitem{CMP}
Costakis G, Manoussos A, Parissis I. Recurrent linear operators. 
Complex. Anal. Oper. Th. 2014;8:1601-1643.
\bibitem{CP} Costakis G, Parissis I. Szemer\'{e}di's theorem, frequent hypercyclicity and multiple recurrence. Math. Scand. 2012;110:251-272.
\bibitem{FMM} N. S. Feldman, V. G. Miller and T. L. Miller. Hypercyclic and supercyclic cohyponormal
operators. Acta Sci. Math. (Szeged), 68: 303-328, 2002. Corrected reprint:
Acta Sci. Math. (Szeged), 68: 965-990, 2002.
\bibitem{Furstenberg} Furstenberg H. Recurrence in ergodic theory and combinatorial number theory. Princeton: Princeton
University Press, M. B. Porter Lectures 1981.
\bibitem{GMOP} V. J. Gal\'{a}n, F. Martl\'{i}nez-Gimenez, P. Oprocha and A. Peris, Product recurrence for
weighted backward shifts, Appl. Math. Inf. Sci. 9 (2015), 2361-2365.
\bibitem{Peris} Grosse-Erdmann K. G, Peris A. Linear Chaos.
(Universitext). Springer, London 2011.
\bibitem{Grosse} Grosse-Erdmann, K. G. (1999). Universal families and hypercyclic operators. Bulletin of the American Mathematical Society, 36(3), 345-381.
\bibitem{GH} W. H. Gottschalk and G. H. Hedlund, Topological dynamics, American Mathematical
Society, Providence, R. I. 1955.
\bibitem{GMQ} S. Grivaux, \'{E}. Matheron and Q. Menet, Linear dynamical systems on Hilbert spaces:
Typical properties and explicit examples, arXiv:1703.01854v1 [math.FA] 6 Mar 2017.
\bibitem{HW} Hilden HM, Wallen LJ. Some cyclic and non-cyclic vectors of certain operators. 
Indiana Univ. Math. J. 1994; 23:557-565.

\bibitem{HZ} S. He, Y. Huang and Z. Yin, $J^F$-class weighted backward shifts, Internat. J. Bifur.
Chaos Appl. Sci. Engrg. 28 (2018), 1850076, 11 pp.
\bibitem{Kitai} C. Kitai. Invariant closed sets for linear operators. Ph.D. thesis, University of
Toronto, Toronto, 1982.
\bibitem{Poincare} H. Poincar\'{e}. Sur le probl\`{e}me des trois corps et les \'{e}quations de la dynamique. Acta mathematica, 13(1), 3-270 (1890).
\bibitem{Rolewicz} Rolewicz, S. (1969). On orbits of elements. Studia Mathematica, 32(1), 17-22.
\bibitem{YW} Z. Yin and Y. Wei, Recurrence and topological entropy of translation operators, J.
Math. Anal. Appl. 460 (2018), 203-215.
\end{thebibliography}
\end{document}